\newif\ifpictures
\numberwithin{equation}{section}
\newtheorem{thm}{Theorem}
\newtheorem{prop}[thm]{Proposition}
\newtheorem{lem}[thm]{Lemma}
\newtheorem{cor}[thm]{Corollary}
\theoremstyle{definition}
\newtheorem{example}[thm]{Example}
\newtheorem{remark1}[thm]{Remark}
\newtheorem{openproblem1}[thm]{Open problem}
\newenvironment{ex}{\begin{example}\rm}{\end{example}}
\numberwithin{thm}{section}
\newcounter{FNC}[page]
\def\newfootnote#1{{\addtocounter{FNC}{2}$^\fnsymbol{FNC}$%
     \let\thefootnote\relax\footnotetext{$^\fnsymbol{FNC}$#1}}}
\newcommand{\C}{\mathbb{C}}
\newcommand{\N}{\mathbb{N}}
\newcommand{\R}{\mathbb{R}}
\DeclareMathOperator{\SO}{SO}
\DeclareMathOperator{\Sym}{S}
\DeclareMathOperator{\id}{id}
\DeclareMathOperator{\rad}{rad}
\begin{document}

\title{Determining a rotation of a tetrahedron from a projection}
\author[Richard J. Gardner, Paolo Gronchi, and Thorsten Theobald]
{Richard J. Gardner, Paolo Gronchi, and Thorsten Theobald}
\address{Department of Mathematics, Western Washington University,
Bellingham, WA 98225-9063} \email{Richard.Gardner@wwu.edu}
\address{Dipartimento di Matematica ``Ulisse Dini", Universit\`a degli Studi di Firenze, Piazza Ghiberti 27, 50122
Firenze, Italy} \email{paolo@fi.iac.cnr.it}
\address{FB 12 -- Institut f\"{u}r Mathematik, Goethe-Universit\"{a}t, Postfach 111932, D-60054 Frankfurt am Main, Germany}
\email{theobald@math.uni-frankfurt.de}
\thanks{First author supported in
part by U.S.~National Science Foundation Grants DMS-0603307 and DMS-1103612. Third author supported in part by DFG Grant TH 1333/2-1.}
\subjclass[2010]{Primary: 52B10, 68U05, 68W30; secondary: 05E18}
\keywords{perspective-$n$-point problem, reconstruction, tetrahedron,
  medical imaging, geometric tomography}

\begin{abstract}
The following problem, arising from medical imaging, is addressed:  Suppose that $T$ is a known tetrahedron in $\R^3$ with centroid at the origin. Also known is the orthogonal projection $U$ of the vertices of the image $\phi T$ of $T$ under an unknown rotation $\phi$ about the origin. Under what circumstances can $\phi$ be determined from $T$ and $U$?
\end{abstract}

\date{\today}

\maketitle
\section{Introduction}

The \emph{perspective-$n$-point} problem, often abbreviated P$n$P, is the problem of
determining the position of a camera from the perspective images of $n$ given points. The problem has been widely investigated during the last few decades, using several traditional camera models, such as projective (see, for example, \cite{hcl-89}),
orthographic  (see, for example, \cite{huttenlocher-ullman-87}),
or weak perspective (i.e., scaled orthographic, see \cite{alter-94,hbh-95}),
and focusing on various aspects (such as small values of $n$).

While the solution of a specific instance of P$n$P is often an application of
elementary geometry, understanding the configuration space---for example, classifying which configurations admit a given number of solutions---involves challenging nonlinear aspects
(cf.\ \cite{fmr-2008, rieck-2011} and the references therein).
 Indeed, it was not until recently that Faug\`{e}re et al \cite{fmr-2008}
(partially) classified the configurations for the perspective-3-point problem via the discriminant variety, using extensive computations.

Our point of departure is a paper by Robinson, Hemler and
Webber \cite{rhw-2000}, who, motivated by an application in
imaging, studied the perspective-4-point problem for the orthographic camera model.  The problem is as follows.  A given tetrahedron $T$ in $\R^3$ with
vertices $p^{(1)}, \ldots, p^{(4)}$ has been transformed by an unknown (direct) rigid motion $\phi$. Also given is the image $U = \{u^{(1)}, \ldots, u^{(4)}\}$ of the set of vertices of $\phi T$ under a parallel projection onto the $xy$-plane in an unknown direction $w\in S^2$.  The problem is to find $\phi$ and $w$.

In \cite{rhw-2000} it is observed that one may as well take the parallel projection to be the orthogonal projection $\pi_z$ onto the $xy$-plane.  It is also noted that then $\phi$ can only be determined up to a vertical translation, because such a translation does not change $U$.  Since $\phi$ is the composition of a rotation about the origin and a translation, it suffices to determine the rotation and the horizontal component of the translation. The authors of \cite{rhw-2000} make the assumption that it is known which projection comes from which vertex of $T$, that is, they assume that $u^{(i)} = \pi_z\phi p^{(i)}$, $i=1,\dots,4$. Under this labeling assumption, they show that the rotation and horizontal shift can be determined.

Our purpose here is to study this problem when the labeling assumption is removed, and to provide a systematic foundational study from the viewpoint of nonlinear computational geometry (see, for example, \cite{ast-2011,est-2009,petitjean-99}).

Clearly, the centroid of the vertices of $\phi T$ must lie on the vertical line through the known centroid of $U$.  From this, we make two conclusions.  Firstly,  the horizontal shift can always be determined, so we may assume that $\phi$ is a rotation about the origin. Secondly, if such a rotation $\phi$ can be determined when the centroid of $T$ is at the origin, then it can also be determined when the centroid of $T$ is located elsewhere.  Thus our problem can be stated in the following form.

\medskip

\noindent Suppose that $T$ is a \emph{known} tetrahedron in $\R^3$ with
vertices $p^{(1)}, \ldots, p^{(4)}$ and centroid at the origin. Also \emph{known} is the orthogonal projection $U = \{u^{(1)}, \ldots, u^{(4)}\}$ onto the $xy$-plane of the vertices of the image $\phi T$ of $T$ under an \emph{unknown} rotation $\phi$ about the origin. Under what circumstances can we determine $\phi$ from $T$ and $U$?

\medskip

Obviously, if $T$ has nontrivial automorphisms---for example, if $T$ is regular---then $\phi$ cannot be uniquely determined. Now let $T$ be an arbitrary tetrahedron in $\R^3$ with vertices $p^{(1)}, \ldots, p^{(4)}$.  Suppose that the images $\phi\left((p^{(1)} + p^{(2)})/2\right)$
and $\phi\left((p^{(3)} + p^{(4)})/2\right)$ under $\phi$ of the midpoints of the opposite edges $[p^{(1)},p^{(2)}]$ and $[p^{(3)},p^{(4)}]$
are contained in the $z$-axis. Then a rotation $\psi$ of $\phi T$ by $\pi$
about the $z$-axis results in a tetrahedron $\psi \phi T$ whose vertices also project onto $U$. In this case $U$ forms the vertices of a parallelogram in the $xy$-plane, so $U$ has a symmetry (rotation by $\pi$ about its center).

These preliminary remarks show that in general $\phi$ cannot be determined
if $T$ or $U$ has extra symmetries. A general goal is to understand if it can
be uniquely determined otherwise, and if not, to find those $T$ and $U$
that do allow $\phi$ to be determined.

The relation between our problem and the one considered in \cite{rhw-2000} can be made clearer if we regard the labels of the vertices of $T$ as having been permuted by an unknown permutation $\sigma$ of $\{1,2,3,4\}$, so that $u^{(i)}$ is the projection of $\phi p^{(\sigma(i))}$, $i=1,\dots,4$.  Then the problem in \cite{rhw-2000} corresponds to the case when $\sigma$ is the identity.

In this paper, we deal with both uniqueness and reconstruction.
Our focus is on the geometry of the problem, in particular, the configuration space of all tetrahedra leading (for a given rotation) to the same set of projection points as the original tetrahedron. By decomposing this space into the
union of the spaces corresponding to the various types of
permutations involved, we can treat the configuration questions
from a linear algebra point of view. Then, using some
nonlinear symbolic methods, we precisely classify situations where the dimension of the configuration space deviates from the expected dimension.  As a consequence, we are able to prove in Theorem~\ref{thm0} that for almost all tetrahedra $T$ in $\R^3$ with centroid at the origin, there does not exist a rotation $\phi$ other than the identity such that $\pi_z\phi T=\pi_zT$.  However, the various lemmas that we prove along the way provide much more detailed information.

Problems such as the one addressed here, involving the retrieval of information about a geometric object from data concerning its projections onto lines or planes (or intersections with lines or planes), fall under the umbrella of geometric tomography \cite{gardner-2006}.

The paper is structured as follows. After the preliminary Section~\ref{se:prelim}, the case when the permutation $\sigma$ is the identity is considered in Section~\ref{se:labeled}
from a linear algebra and symbolic viewpoint. Then, in Sections~\ref{se:onec},~\ref{se:twoc},~\ref{se:threec}, and~\ref{se:fourc},
we deal with the other cases. Finally, in Section~\ref{se:main}
we state the main conclusions for our study.

\section{Notation and preliminaries\label{se:prelim}}

As usual, $S^{n-1}$ denotes the unit sphere and $o$ the origin in Euclidean
$n$-space $\R^n$.  The Euclidean norm is denoted by $\|\cdot\|$. Unless specified otherwise, $x_i$ will signify the $i$th coordinate of a point $x=(x_1,\dots,x_n)$ in $\R^n$.  The unit ball in $\R^n$ will be denoted by $B^n$.  We write
$[x,y]$ for the line segment with endpoints $x$ and $y$. Orthogonal projection onto the $xy$-plane in $\R^3$ is denoted by $\pi_z$.  Given $u\in S^2$, we denote the line through the origin parallel to $u$ by $l_u$.  The {\em dimension} $\dim A$ of a set $A$ in $\R^n$ is the dimension of its affine hull.
The {\em symmetric group} on $\{1,2,\dots,n\}$ is denoted by $\Sym_n$.

Let $\SO(3)$ denote the {\em group of rotations} about the origin in $\R^3$.
An element of $\SO(3)$, henceforth simply called a {\em rotation}, can be specified in terms of a rotation axis (a line through $o$) and a rotation angle.  We shall also use the following characterization using quaternions (see, for example, \cite[Sec.~8.2]{gallier-2001}). For a quaternion $q = a + bi + cj + dk$, where $a,b,c,d\in\R$, the rotation matrix $R(q)$ associated with $q$ is
\begin{equation}
  \label{eq:quaternionmatrix}
  R(q)=
  \frac{1}{\|q\|^2}
  \left( \begin{array}{ccc}
    a^2+b^2-c^2-d^2 & 2bc - 2ad & 2 bd + 2ac \\
    2bc + 2ad & a^2 - b^2 + c^2 - d^2 & 2cd - 2ab \\
    2bd - 2ac & 2cd + 2ab & a^2 - b^2 - c^2 + d^2
  \end{array} \right),
\end{equation}
where $\|q\|=\sqrt{a^2+b^2+c^2+d^2}$. Conversely, the quaternion $q$ corresponding to a rotation with axis in the direction $w=(w_1,w_2,w_3)\in S^2$ and rotation angle $\alpha$ is
\begin{equation}
  \label{eq:quaternionaxis}
  q=\cos(\alpha/2)+ w_1 \sin (\alpha/2)i+ w_2 \sin(\alpha/2)j+ w_3 \sin(\alpha/2)k.
\end{equation}
Since a rotation around an axis of rotation $l_u$ by angle $-\alpha$ is the same as a rotation around $l_{-u}$ by angle $\alpha$, we may without loss of generality restrict $\alpha$ to the interval $[0,\pi]$.

It will be convenient to regard a {\em tetrahedron} in $\R^3$ simply as a set of four points in $\R^3$.  Either these points are in general position, in which case they form the set of vertices of a full-dimensional tetrahedron in the usual sense of the term, or they are contained in a plane and hence lower dimensional.

Throughout, we consider a known tetrahedron $T=\{p^{(1)}, \ldots, p^{(4)}\}$ in $\R^3$ with centroid at the origin.  The projection $U = \{u^{(1)}, \ldots, u^{(4)}\}$ of $\phi T$ onto the $xy$-plane, where $\phi\in \SO(3)$ is unknown, is also given.  Then there is an unknown permutation $\sigma\in \Sym_4$ such that
\begin{equation}
  \label{eq:prelim1}
 \pi_z\phi p^{(i)}= u^{\sigma(i)},
\end{equation}
for $i=1,\dots,4$.

The case dealt with in \cite{rhw-2000}, corresponding to $\sigma=\id$, the identity permutation, can be viewed as that of a {\em labeled tetrahedron}; the projections of the vertices retain the labels, so that it is known which point in $U$ corresponds to which vertex of $T$.

If $\phi\in \SO(3)$ and $\sigma\in \Sym_4$, we denote by $\mathcal{T}_{\sigma}(\phi)$ the family of (possibly lower-dimensional) tetrahedra $T=\{p^{(1)}, \ldots, p^{(4)}\}$ such that
\begin{equation}\label{eq:cond2}
(\phi p^{(i)})_j=p^{({\sigma(i)})}_j,
\end{equation}
for $i=1,\dots,4$ and $j=1,2$.  When there are two different rotations of $T$ giving rise to the same set $U$ of projections onto the $xy$-plane, we may for our purposes assume that one rotation is the identity, and then by (\ref{eq:prelim1}), (\ref{eq:cond2}) holds for some $\phi\neq\id$.  Thus it suffices to study the system (\ref{eq:cond2}) in order to understand uniqueness issues, and we will be interested in the dimension of $\mathcal{T}_{\sigma}(\phi)$ in various situations.
Since each tetrahedron is a set of four points in $\R^3$, we could also regard a tetrahedron as a point in $(\R^3)^4\simeq\R^{12}$, and thus consider $\mathcal{T}_{\sigma}(\phi)$ as a set in $\R^{12}$.  However, we are assuming that $T$ has centroid at the origin, so that
\begin{equation}\label{cendep}
\sum_{i=1}^4 p^{(i)}=o=\sum_{i=1}^4 \phi p^{(i)}.
\end{equation}
Using (\ref{cendep}), we may identify $T$ with any three of its points, say the first three, and then the equation in (\ref{eq:cond2}) corresponding to $i=4$ is redundant.  We shall therefore identify $\mathcal{T}_{\sigma}(\phi)$ with the corresponding set in $\R^{9}$, which, in view of (\ref{eq:cond2}) and (\ref{cendep}), is actually a subspace of $\R^9$. Of course each tetrahedron gives rise to not one but 24 points in $\R^{9}$ (depending on which three of its vertices are selected and in which order), but since we are only interested in the dimension of $\mathcal{T}_{\sigma}(\phi)$, this loss of bijectivity is unimportant.

Clearly there are only five essentially different cases to consider.  There is the labeled case when $\sigma=\id$, and if $\sigma\neq \id \in S_4$, then $\sigma$ is a two-cycle, a direct product of two two-cycles, a three-cycle, or a four-cycle.  Corresponding to the four latter cases, we can, without loss of generality, consider in turn (i) $\sigma=(2,1,3,4)$, (ii) $\sigma=(2,1,4,3)$, (iii) $\sigma=(2,3,1,4)$, and (iv) $\sigma=(2,3,4,1)$.

\section{The labeled case: $\sigma=\id$\label{se:labeled}}

In the terminology introduced in the previous section, Robinson, Hemler,
and Weber \cite{rhw-2000} proved the following result.

\begin{prop}\label{PHW}
Let $T$ be a full-dimensional labeled tetrahedron in $\R^3$. Then there do not exist two different rotations such that the resulting (labeled) projections of the rotated vertices of $T$ onto the $xy$-plane coincide. Thus the rotation is uniquely determined by the (labeled) projection.
\end{prop}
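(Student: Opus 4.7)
The plan is to reduce the claim, via a simple change of variables, to the following purely linear-algebraic statement: for any non-identity rotation $\psi\in\SO(3)$, the set
\[
W_\psi:=\{p\in\R^3\colon \pi_z\psi p=\pi_z p\}
\]
is a linear subspace of $\R^3$ of dimension at most $2$. Once this is in hand, the four vertices of a full-dimensional tetrahedron cannot all lie in such a subspace, and the desired uniqueness follows.

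For the reduction, I would suppose that $\phi_1,\phi_2\in\SO(3)$ with $\phi_1\ne\phi_2$ satisfy $\pi_z\phi_1 p^{(i)}=\pi_z\phi_2 p^{(i)}$ for $i=1,\dots,4$. Setting $\psi:=\phi_2\phi_1^{-1}\ne\id$ and $q^{(i)}:=\phi_1 p^{(i)}$, the hypothesis rewrites as $\pi_z\psi q^{(i)}=\pi_z q^{(i)}$, i.e.\ $q^{(i)}\in W_\psi$ for every $i$. Since $\phi_1$ is a linear isomorphism of $\R^3$, the set $\{q^{(1)},\dots,q^{(4)}\}=\phi_1 T$ is still full-dimensional, so it suffices to prove that $\dim W_\psi\le 2$.

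To bound the dimension, I would rewrite $W_\psi=\ker\bigl(\pi_z\circ(\psi-I)\bigr)$ and analyze $\psi-I$. Because $\psi$ is a rotation through a nonzero angle $\alpha$ about some axis $L$ through the origin, it preserves the orthogonal decomposition $\R^3=L\oplus L^\perp$: on $L$ it is the identity, and on $L^\perp$ it is a planar rotation by $\alpha\ne 0$. Hence $\psi-I$ vanishes on $L$ and is invertible on $L^\perp$, so its image is exactly the two-dimensional plane $L^\perp$. Since $\ker\pi_z$ is the one-dimensional $z$-axis, a two-dimensional plane cannot be contained in it, and therefore $\pi_z(L^\perp)$ has dimension at least $1$. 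By rank--nullity, $\dim W_\psi\le 3-1=2$, as required.

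I do not anticipate a serious obstacle; the whole argument amounts to two applications of rank--nullity. The only mildly delicate point is the edge case in which the rotation axis $L$ lies in the $xy$-plane, for then $\pi_z(L^\perp)$ drops from dimension $2$ to dimension $1$ and $W_\psi$ genuinely attains dimension $2$. However, $2<3$, so four points spanning a full-dimensional tetrahedron still cannot all lie in $W_\psi$, and the argument survives this case as well.
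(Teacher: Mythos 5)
Your proof is correct and follows essentially the same route as the paper: both arguments reduce to a single non-identity rotation $\psi$ (the paper normalizes one of the two rotations to be the identity, you take $\psi=\phi_2\phi_1^{-1}$ and push the tetrahedron forward by $\phi_1$) and then show that $\{x\in\R^3:\pi_z\psi x=\pi_z x\}$ is a linear subspace of dimension at most $2$, which cannot contain the four vertices of a full-dimensional tetrahedron. The only difference is in how the dimension bound is obtained, and it is minor: the paper notes that each coordinate condition $(\psi x)_j=x_j$ defines either a plane or all of $\R^3$, with the latter for both $j=1,2$ forcing $\psi=\id$, whereas you compute $\rank\bigl(\pi_z(\psi-I)\bigr)\ge 1$ from the axis decomposition $\R^3=L\oplus L^\perp$ and rank--nullity.
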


\begin{proof}
Suppose two different rotations as in the statement of the proposition exist.  Clearly we may assume that one is the identity $\id$ and denote the other by $\phi\neq \id$.  If the resulting projections coincide, then from (\ref{eq:cond2}) with $\sigma=\id$, we obtain
\begin{equation}
  \label{eq:cond1}
  (\phi p^{(i)})_j=p^{(i)}_j,
\end{equation}
where $i=1,\dots,4$ and $j=1,2$.  Let
\begin{equation}
  \label{eq:hyperplane1}
  H_j=\{x \in \R^3:(\phi x)_j - x_j = 0\},
\end{equation}
for $j=1,2$.  Then $p^{(i)}\in H_1\cap H_2$ for $i=1,\dots,4$. If either of the subspaces $H_1$ and $H_2$ are proper subsets of $\R^3$, we are done, since $T\subset H_1\cap H_2$ contradicts the assumption that $T$ is full dimensional.  Otherwise, we have $(\phi x)_j - x_j = 0$ for all $x\in \R^3$ and $j=1,2$. But then $\phi$ fixes the $xy$-plane and hence $\phi=\id$.
\end{proof}

The authors of \cite{rhw-2000} gave a different proof of the previous proposition, deriving it from a reconstruction procedure. For the convenience of the reader, we provide a different reconstruction method that can be obtained from the reconstruction
result for the perspective-3-point problem under weak perspective (see \cite{alter-94,hbh-95}). Extending the reconstruction algorithm in \cite{hbh-95}
to the four-point case works as follows.

Let $\phi$ be the unknown rotation. We first construct the unique circle $C$ containing the known points $p^{(i)}$ for $i=1,2,3$.  We aim to construct the projection $E=\pi_z\phi C$ of $\phi C$, an ellipse in the $xy$-plane whose semi-major axis has length equal to the radius of $C$.  The known points
$u^{(i)}$ for $i=1,2,3$ lie on $E$. For $i=1,2,3$, denote by $m^{(i)}$ the midpoint of the edge of the triangle $p^{(1)}, p^{(2)}, p^{(3)}$ opposite to $p^{(i)}$ and by $t^{(i)}$ the other intersection of the line through
$p^{(i)}$ and $m^{(i)}$ with the circle $C$.  The corresponding midpoints $\pi_z \phi m^{(i)}$ of the edges of the triangle $u^{(1)}, u^{(2)}, u^{(3)}$ opposite to $u^{(i)}$ can of course be constructed since this triangle is known.
Then, for $i=1,2,3$, the point $\pi_z \phi t^{(i)}$ can be constructed by elementary geometry, since
$$\frac{\|p^{(i)}-m^{(i)}\|}{\|p^{(i)}-t^{(i)}\|} =\frac{\|u^{(i)}-\pi_z\phi m^{(i)}\|}{\|u^{(i)}-\pi_z\phi t^{(i)}\|},$$
for $i=1,2,3$.  Since $\pi_z \phi t^{(i)}$ lies on $E$ for $i=1,2,3$, we have constructed six points on $E$.  But any five points determine an ellipse, so we can construct $E$ itself. Now $E$ determines the circle $\phi C$, up to reflection in the $xy$-plane and vertical translation, and hence the points
$\phi p^{(1)}$, $\phi p^{(2)}$, and $\phi p^{(3)}$ are similarly determined.
Since $T$ is known, the position of $\phi p^{(4)}$ is also known relative to $\phi C$, up to a reflection in the plane containing $\phi C$.  If $T$ is full dimensional, $\det\phi$ is determined by the points $\phi p^{(i)}$, $i=1,\dots,4$, and since $\det\phi=1$, no reflection is possible.
Now we can use the fact that because the centroid of $T$ is at the origin, the centroid of $\phi T$ is also.  This allows $\phi T$ and hence (since we are in the labeled case) $\phi$ to be completely determined, if $T$ is full dimensional, and up to a reflection in the $xy$-plane, if $T$ is contained in a plane.  Note that if $T$ is contained in a plane, a reflection of $T$ in the $xy$-plane is of the form $\psi T$ for some rotation $\psi$ about the origin, so $\phi$ cannot be fully determined in this case.

Recall that we regard the family $\mathcal{T}_{\sigma}(\phi)$ as a set in $\R^{9}$ and that we are considering the case $\sigma=\id$.

\begin{lem}\label{ident}
Let $\phi \neq \id$ be a rotation.  Then $\dim \mathcal{T}_{\id}(\phi)=3$, unless the axis of rotation is horizontal, when $\dim \mathcal{T}_{\id}(\phi)=6$.
\end{lem}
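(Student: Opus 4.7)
The plan is to decouple the defining conditions of $\mathcal{T}_{\id}(\phi)$ across the four vertices. Using the identification of $\mathcal{T}_{\id}(\phi)$ with a subspace of $\R^9$ via the first three vertices, the conditions \eqref{eq:cond2} with $\sigma=\id$ read, for $i=1,2,3$ and $j=1,2$,
$$
(\phi p^{(i)})_j = p^{(i)}_j,
$$
which says that each $p^{(i)}$ lies in the subspace $V := H_1 \cap H_2$, with $H_j$ as in \eqref{eq:hyperplane1}. The equation for $i=4$ is automatic, since summing the equations for $i=1,2,3$ and using the centroid relation \eqref{cendep} together with the linearity of $\phi$ recovers it. Hence the defining conditions decouple across $i$, giving $\mathcal{T}_{\id}(\phi) = V \times V \times V \subset \R^9$ and therefore $\dim \mathcal{T}_{\id}(\phi) = 3 \dim V$. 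It thus suffices to show that $\dim V = 1$ when the rotation axis of $\phi$ is non-horizontal and $\dim V = 2$ when it is horizontal.

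Next I would rewrite $V$ in terms of the operator $\phi - I$. By construction $x \in V$ if and only if the first two coordinates of $(\phi - I)x$ vanish, i.e., $(\phi - I)x \in l_{e_3}$, where $e_3=(0,0,1)$ and $l_{e_3}$ is the $z$-axis. Thus $V = (\phi - I)^{-1}(l_{e_3})$. For the nontrivial rotation $\phi$ with axis in direction $w \in S^2$, the fixed-point subspace $\ker(\phi - I)$ equals $l_w$, so $\phi - I$ has rank $2$, and since $\phi$ is orthogonal one checks that $w \cdot (\phi x - x) = 0$ for all $x$, whence $\mathrm{im}(\phi - I) = w^\perp$. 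Combining these gives
$$
V = (\phi - I)^{-1}\bigl(l_{e_3} \cap w^\perp\bigr).
$$

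The whole lemma then reduces to a one-line case analysis of $l_{e_3} \cap w^\perp$. If the axis $l_w$ is not horizontal, then $e_3 \cdot w \neq 0$, so $l_{e_3} \cap w^\perp = \{o\}$ and $V = \ker(\phi - I) = l_w$ is one-dimensional, yielding $\dim \mathcal{T}_{\id}(\phi) = 3$. If $l_w$ is horizontal, then $e_3 \cdot w = 0$, so $l_{e_3} \subset w^\perp$, and $V = (\phi - I)^{-1}(l_{e_3})$ is the preimage of a one-dimensional subspace of the image of $\phi - I$; by rank--nullity, $\dim V = \dim\ker(\phi - I) + 1 = 2$, giving $\dim \mathcal{T}_{\id}(\phi) = 6$. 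There is no real obstacle here: once the decoupling across $i$ is noted and the standard structure of $\phi - I$ is recalled, the result is a direct consequence of the elementary observation that $l_{e_3}$ meets $w^\perp$ trivially precisely when the axis is non-horizontal.
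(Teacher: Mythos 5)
Your argument is correct, and it takes a genuinely different route from the paper. You exploit the fact that for $\sigma=\id$ the six conditions decouple vertex by vertex, so $\mathcal{T}_{\id}(\phi)\cong V\times V\times V$ with $V=\{x:(\phi x-x)_1=(\phi x-x)_2=0\}=(\phi-I)^{-1}(l_{e_3})$, and you then compute $\dim V$ coordinate-free from the structure of $\phi-I$: for $\phi\neq\id$ with axis direction $w$ one has $\ker(\phi-I)=l_w$ and $\mathrm{im}(\phi-I)=w^{\perp}$ (your verification via $\phi w=w$ and orthogonality is fine), so $\dim V=1+\dim\bigl(l_{e_3}\cap w^{\perp}\bigr)$, which is $1$ or $2$ according as $e_3\cdot w\neq 0$ or $e_3\cdot w=0$, i.e.\ as the axis is non-horizontal or horizontal. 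The paper instead works with the $6\times 9$ coefficient matrix $M$ in the quaternion parametrization: it observes $\dim\mathcal{T}_{\id}(\phi)\geq 3$ from the Rank Theorem, computes a specific $6\times 6$ minor to be $8d^6$ so that any rank drop forces $d=0$ (horizontal axis), and then checks that in that case the rank falls to $3$, giving dimension $6$. Your approach is more elementary and essentially turns the paper's informal geometric explanation following the lemma into the proof itself; what the paper's matrix-and-minors computation buys is a template (block matrices built from $A-I$, ideals of $6\times 6$ minors, Gr\"obner bases) that carries over to the permuted cases in Lemmas~\ref{onec}--\ref{fourc}, where the equations no longer decouple across the vertices and your product decomposition is unavailable.
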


\begin{proof}
Let $T$ be a tetrahedron with vertices $p^{(i)}$, $i=1,\dots,4$ and centroid at the origin. Identifying $T$ with $p^{(i)}$, $i=1,2,3$ and using  \eqref{eq:cond2} with $\sigma=\id$ and (\ref{cendep}), we see that $T\in \mathcal{T}_{\id}(\phi)$ if and only if
\begin{equation}\label{eq:cond3}
(\phi p^{(i)})_j - p^{(i)}_j =0,
\end{equation}
for $i=1,2,3$ and $j=1,2$, a system of six equations in nine variables. Let $M$ be the corresponding $6 \times 9$ coefficient matrix,
where the variables are ordered $p^{(1)}_1$,
$p^{(1)}_2$, $p^{(1)}_3$, $p^{(2)}_1,\dots, p^{(3)}_3$, and where for $i=1,2,3$, rows $2i-1$ and $2i$ of $M$ correspond to the equations with index $j=1$ and $2$, respectively.  Then $\dim \mathcal{T}_{\id}(\phi)$ equals the dimension of the null space of $M$.

Since $M$ obviously has rank at most six, we obtain $\dim \mathcal{T}_{\id}(\phi)\ge 9-6=3$ directly from the Rank Theorem.

Let
\begin{equation}\label{AI}
A=\left( \begin{array}{ccc}
    a^2+b^2-c^2-d^2 & 2bc - 2ad & 2 bd + 2ac \\
    2bc + 2ad & a^2 - b^2 + c^2 - d^2 & 2cd - 2ab
  \end{array} \right)\quad{\text{and}}\quad I=\left( \begin{array}{ccc}
    1 & 0 & 0\\
    0 & 1 & 0
  \end{array} \right).
\end{equation}
The rotation $\phi$ can be represented by the matrix (\ref{eq:quaternionmatrix}) with $a^2+b^2+c^2+d^2=1$, and using the latter equation and (\ref{eq:cond3}), we can rewrite $M$ as a block matrix,
$$M=\left( \begin{array}{ccc}
    A-I & 0 & 0 \\
    0 & A-I & 0\\
    0 & 0 & A-I
  \end{array} \right),$$
where
$$
A-I=2\left( \begin{array}{ccc}
    -c^2-d^2 & bc - ad & bd + ac\\
    bc + ad & - b^2- d^2 & cd - ab
  \end{array} \right).
$$

Suppose that $\dim \mathcal{T}_{\id}(\phi)>3$. Then the rank of $M$ is less than six, so all the $6 \times 6$ minors of $M$ vanish. The $6\times 6$ minor corresponding to columns 1, 2, 4, 5, 7, and 8 of $M$ is
$$8\det\left( \begin{array}{cc}
    -c^2-d^2 & bc - ad\\
    bc + ad & - b^2- d^2
  \end{array} \right)^3=8d^6(a^2+b^2+c^2+d^2)^3=8d^6.$$
Hence $d=0$, which in view of (\ref{eq:quaternionaxis}) implies that the axis of rotation is horizontal.  From the geometry it is clear that without loss of generality, we may suppose that this axis is parallel to $(1,0,0)$, so that $b=\sin(\alpha/2)\neq 0$ and $c=0$.  But then
$$A=2\left( \begin{array}{ccc}
    0 & 0 & 0\\
    0 & - b^2 &- ab
  \end{array} \right),$$
in which case all $4\times 4$ minors vanish but not all $3\times 3$ minors do so. Then the rank of $M$ is three, so $\dim \mathcal{T}_{\id}(\phi)=9-3=6$.
\end{proof}

The geometry corresponding to the previous lemma is as follows.  We know from Proposition~\ref{PHW} that each member of $\mathcal{T}_{\id}(\phi)$ is degenerate, and hence contained in a plane.  If $\phi$ is a rotation about a line not contained in the $xy$-plane, then the only solutions to (\ref{eq:cond3}) are those for which each point $p^{(i)}$, $i=1,2,3$, is contained in the axis of rotation.  For each $p^{(i)}$ there is one degree of freedom, and hence the set of solutions is three dimensional.  Suppose, on the other hand, that $\phi$ is a rotation by angle $\alpha$ about a horizontal line.  Then the points $p^{(i)}$, $i=1,2,3$, must lie in one of the two planes containing this line and at an angle $\alpha/2$ to the $xy$-plane.  For each $p^{(i)}$ there are two degrees of freedom, so the set of solutions is six dimensional.

Note that in the previous discussion about $\dim\mathcal{T}_{\id}(\phi)$, the position of $p^{(4)}$ is determined by the centroid condition (\ref{cendep}), once the positions of $p^{(i)}$, $i=1,2,3$, are known.  We shall use this fact frequently in the sequel without special mention.

The geometric statements in Proposition~\ref{PHW} and Lemma~\ref{ident} yield an algebraic corollary. To formulate this, let $I$ be the ideal in the real polynomial ring
$$R=\R[p^{(1)}_1, p^{(1)}_2, p^{(1)}_3, p^{(2)}_1, \ldots, p^{(4)}_3]$$
generated by the linear polynomials
\begin{equation}
\label{eq:poly}
  (\phi p^{(i)})_j - p^{(i)}_j,
\end{equation}
for $i=1,\dots,4$ and $j=1,2$.  An ideal generated by linear forms is also called a \emph{linear ideal}.

\begin{cor}\label{cor4}
For any rotation $\phi\neq \id$, a positive power of
the polynomial
\begin{equation}
  \label{eq:det}
  \det \left( \begin{array}{cccc}
    1 & 1 & 1 & 1 \\
    p^{(1)}_{1} & p^{(2)}_{1} & p^{(3)}_{1} & p^{(4)}_{1} \\
    p^{(1)}_{2} & p^{(2)}_{2} & p^{(3)}_{2} & p^{(4)}_{2} \\
    p^{(1)}_{3} & p^{(2)}_{3} & p^{(3)}_{3} & p^{(4)}_{3}
  \end{array}
  \right)
\end{equation}
is contained in the linear ideal $I$.
\end{cor}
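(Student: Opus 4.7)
The plan is to reduce the corollary to Proposition~\ref{PHW} by means of a standard ideal-theoretic observation about linear ideals.

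First, I would reinterpret Proposition~\ref{PHW} contrapositively: taking one rotation to be the identity and the other to be $\phi \neq \id$, the proposition says that if $(p^{(1)}, \ldots, p^{(4)}) \in \R^{12}$ satisfies all the linear generators (\ref{eq:poly}) of $I$, then the tetrahedron $T = \{p^{(1)}, \ldots, p^{(4)}\}$ cannot be full-dimensional. Since full-dimensionality of $T$ is equivalent to non-vanishing of the determinant (\ref{eq:det}), this means that the determinant vanishes on the entire real zero set of $I$.

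Second, I would invoke the linear-algebra fact that a linear ideal in $\R[p]$ coincides with the vanishing ideal of its real zero set. Concretely, the generators (\ref{eq:poly}) are the entries of $Mx$ for a real $8 \times 12$ matrix $M$; the real zero set is the linear subspace $\ker M$, and the vanishing ideal of $\ker M$ is generated by any basis of the orthogonal complement $(\ker M)^\perp$. The rows of $M$ themselves span this orthogonal complement and already generate $I$, so $I$ equals the vanishing ideal of its real zero set. Combining with the first step, the determinant itself lies in $I$; in particular, every positive power of the determinant lies in $I$.

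I do not anticipate a real obstacle in executing this plan, since the argument is essentially a reformulation of Proposition~\ref{PHW} in ideal-theoretic terms. The only step requiring any care is the claim that a linear ideal coincides with its own vanishing ideal, which can be verified directly by choosing coordinates adapted to a basis of $(\ker M)^\perp$ so that $I$ becomes a coordinate-subspace ideal in the new variables, and any polynomial vanishing on $\ker M$ then manifestly lies in $I$.
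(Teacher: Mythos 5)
Your proposal is correct, but it takes a genuinely different algebraic route from the paper. The paper also starts from Proposition~\ref{PHW} (the determinant vanishes at every zero of the generators~\eqref{eq:poly}), but then invokes Hilbert's Nullstellensatz to conclude only that the determinant lies in $\rad(I)$, i.e.\ that some power lies in $I$, and must append a separate Gr\"obner-basis argument to guarantee that the resulting ideal membership can be realized with real coefficients (the Nullstellensatz being a statement over $\C$). You instead exploit the fact that $I$ is a \emph{linear} ideal: writing the generators as the entries of $Mx$ for a real matrix $M$, the real zero set is $\ker M$, and after a linear change of variables adapted to a basis of the row space $(\ker M)^\perp$ the ideal becomes a coordinate-subspace ideal, so $I$ coincides with the full vanishing ideal of $\ker M$ (in other words, a real linear ideal is prime, hence radical, indeed real-radical). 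This buys you two things: you avoid both the complexification issue and the real-coefficient discussion entirely, and you obtain the stronger conclusion that the determinant itself (the first power) lies in $I$, which of course implies the statement as written. Two cosmetic points: the rows of $M$ span but need not form a basis of $(\ker M)^\perp$ (its rank depends on $\phi$; for instance it drops when the rotation axis is horizontal), which is harmless since a spanning set of linear forms and a basis of their span generate the same ideal; and the deduction of the vanishing of the determinant from Proposition~\ref{PHW} should note, as you implicitly do, that degenerate configurations (including repeated points) are exactly those where the determinant~\eqref{eq:det} is zero, so the contrapositive applies verbatim.
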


\begin{proof}
By Proposition~\ref{PHW}, whenever a sequence of points
$(p^{(1)}, \ldots, p^{(4)})$ is a zero of the polynomials~\eqref{eq:poly} then the
points $p^{(1)}, \ldots, p^{(4)}$ are affinely dependent, that is, the determinant~\eqref{eq:det} vanishes. This determinant can be
seen as a polynomial in $p^{(i)}_j$'s. Thus, by the weak form of Hilbert's Nullstellensatz (see, for example, \cite[Section~4.1]{clo-iva}), this determinant polynomial is contained in the radical ideal $\rad(I)=\{r\in R: r^n\in I {\mathrm{~for ~some~}} n\in \N\}$ .

We remark that though the Nullstellensatz is a statement over the complex numbers, standard Gr\"obner basis theory implies that the determinant is also contained in the real linear ideal. Namely, since the ideal is generated by real polynomials, the standard algorithms for computing a Gr\"obner basis of $\rad(I)$ (see, for example, \cite[Theorem~8.99]{becker-weispfenning-93}) always keep coefficients within the reals and thus provide a real basis for $\rad(I)$. Similarly, the algorithm for reducing a real polynomial with respect to a Gr\"obner basis generated by real polynomials keeps coefficients within the reals. Since for any polynomial in the ideal this reduction algorithm yields a representation in terms of the generators, our remark follows.
\end{proof}

\section{One two-cycle: $\sigma=(2,1,3,4)$\label{se:onec}}

\begin{lem}\label{onec}
Let $\phi \neq \id$ be a rotation by angle $\alpha$ and let $\sigma=(2,1,3,4)$.  Then $\dim \mathcal{T}_{\sigma}(\phi)=3$, unless ${\mathrm{(i)}}$ the axis of rotation is neither horizontal nor vertical and $\alpha=\pi$, when $\dim \mathcal{T}_{\sigma}(\phi)=4$, or ${\mathrm{(ii)}}$ either the axis of rotation is vertical and $\alpha=\pi$ or the axis of rotation is horizontal and  $0<\alpha<\pi$, when $\dim \mathcal{T}_{\sigma}(\phi)=5$, or ${\mathrm{(iii)}}$ the axis of rotation is horizontal and $\alpha=\pi$, when $\dim \mathcal{T}_{\sigma}(\phi)=6$.
\end{lem}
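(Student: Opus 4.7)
The plan is to follow the same linear-algebraic strategy as in Lemma~\ref{ident}. Writing \eqref{eq:cond2} for $\sigma=(2,1,3,4)$, the condition for $i=4$ becomes automatic once the centroid relation \eqref{cendep} is imposed, since $\sigma$ fixes the index $4$. Identifying $T$ with $(p^{(1)},p^{(2)},p^{(3)})\in\R^{9}$, the remaining six linear equations have coefficient matrix
\[
M \;=\; \begin{pmatrix} A & -I & 0 \\ -I & A & 0 \\ 0 & 0 & A-I \end{pmatrix},
\]
where $A$ and $I$ are the $2\times 3$ matrices from \eqref{AI}, so $\dim\mathcal{T}_\sigma(\phi)=9-\rank M$.

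The key observation driving the argument is that the upper-left $4\times 6$ block decouples under the change of basis sending $(p^{(1)},p^{(2)})$ to $(p^{(1)}+p^{(2)},\,p^{(1)}-p^{(2)})$. Concretely, adding and subtracting the two row blocks and, symmetrically, the two column blocks transforms this submatrix into a block-diagonal matrix with diagonal blocks $2(A-I)$ and $2(A+I)$. Since the third block of $M$ contributes an additional copy of $A-I$, this yields
\[
\rank M \;=\; 2\,\rank(A-I) \;+\; \rank(A+I).
\]

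It remains to determine $\rank(A\pm I)$ in terms of the axis and angle of $\phi$. The three $2\times 2$ minors of $A-I$ (already needed in Lemma~\ref{ident}) are, up to nonzero constants, $d^{2}$, $cd$, and $bd$; hence they all vanish precisely when $d=0$, i.e.\ when the axis is horizontal. So $\rank(A-I)=2$ if the axis is not horizontal and $\rank(A-I)=1$ if it is (the case $A-I=0$ being excluded by $\phi\neq\id$). A symmetric computation starting from
\[
A+I \;=\; 2\begin{pmatrix} a^{2}+b^{2} & bc-ad & bd+ac \\ bc+ad & a^{2}+c^{2} & cd-ab \end{pmatrix}
\]
shows that the three $2\times 2$ minors of $A+I$ are, up to nonzero constants, $a^{2}$, $ab$, and $ac$, so they all vanish precisely when $a=0$, i.e.\ when $\alpha=\pi$. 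In that case both rows of $A+I$ are scalar multiples of $(b,c,d)$, so $A+I$ has rank $1$ when $(b,c)\neq 0$ (axis not vertical) and rank $0$ when $b=c=0$ (axis vertical).

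Substituting these values of $\rank(A-I)\in\{1,2\}$ and $\rank(A+I)\in\{0,1,2\}$ into $\dim\mathcal{T}_\sigma(\phi)=9-2\rank(A-I)-\rank(A+I)$ and running through the six combinations of axis type (oblique, horizontal, vertical) and angle ($\alpha=\pi$ or $\alpha\neq\pi$) yields exactly the assertions of the lemma: the generic value $3$; the value $4$ when the axis is oblique and $\alpha=\pi$ (case (i)); the value $5$ either when the axis is vertical with $\alpha=\pi$ or horizontal with $0<\alpha<\pi$ (case (ii)); and the value $6$ when the axis is horizontal with $\alpha=\pi$ (case (iii)). The main computational step is the minor calculation for $A+I$, but it mirrors the one for $A-I$, each reducing via $a^{2}+b^{2}+c^{2}+d^{2}=1$ to a single monomial, so no genuinely new difficulty is encountered.
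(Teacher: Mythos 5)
Your proof is correct, and it follows a genuinely different route from the paper's. The paper sets up the same matrix $M_1$ but then detects the degenerate locus symbolically: it forms the ideal generated by all $6\times 6$ minors of $M_1$ together with $a^2+b^2+c^2+d^2-1$, computes a Gr\"obner basis to conclude that $\dim\mathcal{T}_{\sigma}(\phi)>3$ forces $a=0$ or $d=0$, and then checks the rank of $M_1$ case by case. You instead decouple the system by passing to $p^{(1)}+p^{(2)}$ and $p^{(1)}-p^{(2)}$ (equivalently, adding and subtracting the first two row and column blocks), which is valid since these operations preserve rank and yield
\[
\rank M_1 \;=\; 2\,\rank(A-I)+\rank(A+I),
\]
and then you compute the $2\times2$ minors of $A\pm I$ by hand, each collapsing via $a^2+b^2+c^2+d^2=1$ to a monomial ($d^2$, $cd$, $bd$ for $A-I$; $a^2$, $ab$, $ac$ for $A+I$, as I have checked). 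The resulting rank dichotomies --- $\rank(A-I)=1$ iff $d=0$ (horizontal axis), with $A-I\neq0$ because $\phi\neq\id$; $\rank(A+I)\le1$ iff $a=0$ ($\alpha=\pi$), with rank $0$ exactly when additionally $b=c=0$ (vertical axis) --- reproduce exactly the paper's rank values ($6$, $5$, $4$, $3$) in the corresponding parameter regimes, hence the dimensions $3$, $4$, $5$, $6$ of the lemma. What your approach buys is a fully elementary, software-free argument with a closed-form rank formula that makes the case analysis transparent; what the paper's approach buys is a uniform template (minors plus Gr\"obner basis) that it reuses verbatim for the harder permutation types in Lemmas~\ref{twoc}--\ref{fourc}, where no such clean decoupling of the coefficient matrix is available.
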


\begin{proof}
Let $T$ be a tetrahedron with vertices $p^{(i)}$, $i=1,\dots,4$ and centroid at the origin. Identifying $T$ with $p^{(i)}$, $i=1,2,3$ and using \eqref{eq:cond2} and (\ref{cendep}), we see that $T\in \mathcal{T}_{\sigma}(\phi)$ if and only if
\begin{equation}\label{eq:cond3a}
(\phi p^{(1)})_j - p^{(2)}_j=0,\quad
(\phi p^{(2)})_j - p^{(1)}_j=0,\quad{\text{and}}\quad
(\phi p^{(3)})_j - p^{(3)}_j=0,
\end{equation}
for $i=1,2,3$ and $j=1,2$. Let $M_1$ be the $6 \times 9$ coefficient matrix of this system of six equations in nine variables, where the variables are ordered $p^{(1)}_1$, $p^{(1)}_2$, $p^{(1)}_3$, $p^{(2)}_1,\dots, p^{(3)}_3$, and where for $i=1,2,3$, rows $2i-1$ and $2i$ of $M_1$ correspond to the equations with index $j=1$ and $2$, respectively.  Then $\dim \mathcal{T}_{\sigma}(\phi)$ equals the dimension of the null space of $M_1$.

From (\ref{eq:cond3a}), we have
$$M_1=\left( \begin{array}{ccc}
    A & -I & 0 \\
    -I & A & 0\\
    0 & 0 & A-I
  \end{array} \right),$$
where $A$ and $I$ are given by (\ref{AI}).

Since $M_1$ obviously has rank at most six, we obtain $\dim \mathcal{T}_{\sigma}(\phi)\ge 9-6=3$ directly from the Rank Theorem.

Let $J_1$ be the ideal generated by all $6 \times 6$-minors of $M_1$ together with
the polynomial $\tau=a^2+b^2+c^2+d^2-1$. A Gr\"obner basis $G_1$ of $J_1$ with respect to the lexicographic ordering $a \succ b \succ c \succ d$ is given by
\begin{eqnarray*}
G_1&=&\{a^2d^4,a^2c d^3, a^2c^2 d^2, a^2b d^3, a^2b c d^2, a^2d^2 (b^2 - c^2 - d^2), a d^4 (d^2-1), a c d^3, a c^2 d^2, a b d^3,\\
& & a b c d^2,a b^2 d^2, \tau\}.
\end{eqnarray*}
(This can be found with a variety of standard software. Experts may well prefer a different choice, but with Mathematica, it can be done by defining the matrix {\tt $M_1$}, using {\tt Minors[$M_1$,6]} to generate the $6\times 6$ minors of $M_1$, adjoining the polynomial $a^2 +b^2 + c^2 + d^2-1$ to this list, and then using {\tt GroebnerBasis[\{list\},\{a,b,c,d\}]}.)
From this Gr\"obner basis we see that if $\dim \mathcal{T}_{\sigma}(\phi)>3$, then $a=0$ or $d=0$.  One can check that the rank of $M_1$ is $5$, $4$, or $3$, when $a=0$ and $d\neq 0,\pm 1$, or when either $a=0$ and $d=\pm 1$ or $a\neq 0$ and $d=0$, or when $a=d=0$, respectively.  This yields $\dim \mathcal{T}_{\sigma}(\phi)$ for cases (i), (ii), and (iii) in the statement of the lemma.
\end{proof}

In order to describe the geometry behind the previous lemma, for $i=1,2$, let $H_i$ be the plane containing $p^{(i)}$ which is orthogonal to the axis of rotation $l_u$, let $C_i$ be the circle in $H_i$ containing $p^{(i)}$ and with center on $l_u$, and let $l_i$ be the vertical line through $p^{(i)}$.  See Figure~\ref{fi:picture2}.  The rotation $\phi$ takes $p^{(1)}$ on $l_1$ around the circle $C_1$ to the point $\phi p^{(1)}$ on $l_2$ and also takes $p^{(2)}$ on $l_2$ around the circle $C_2$ to the point $\phi p^{(2)}$ on $l_1$. The angle of rotation is of course the same in each case, and we also have
$$\|p^{(1)}-\phi p^{(2)}\|=\|p^{(2)}-\phi p^{(1)}\|,$$
since the planes $H_1$ and $H_2$ are parallel and so intersect $l_1$ and $l_2$ in equidistant pairs of points.  It follows that $C_1$ and $C_2$ have equal radii and hence $p^{(1)}$ and $p^{(2)}$ are the same distance from $l_u$.

\begin{figure}[htb]
\epsfig{file=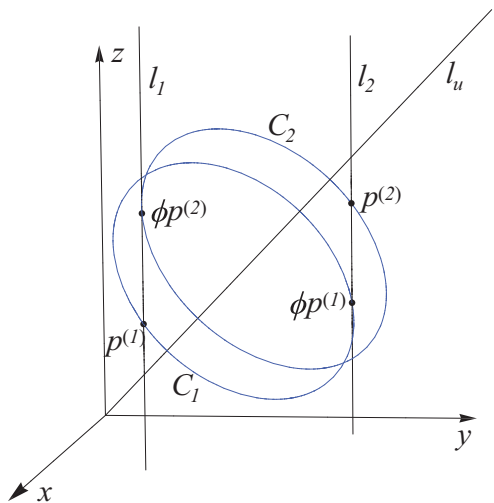, bb=7 7 239 237, clip=,
width=2.7in}
\caption{Geometry behind Lemma 4.1.}
\label{fi:picture2}
\end{figure}

If $l_u$ is neither vertical nor horizontal, then $\pi_zC_1$ and $\pi_zC_2$ are ellipses with their centers on $\pi_zl_u$.  If $\pi_zC_1\neq \pi_zC_2$, these two ellipses intersect in two points, namely $\pi_zp^{(1)}= \pi_z\phi p^{(2)}$ and $\pi_zp^{(2)}= \pi_z\phi p^{(1)}$, which are reflections of each other in $\pi_zl_u$.  Moreover, the angle $\alpha$ of rotation must be strictly between $0$ and $\pi$. Since $C_1$ and $C_2$ must intersect the vertical lines $l_1$ and $l_2$ through these two points, there is only one degree of freedom in choosing the position of each of $p^{(1)}$ and $p^{(2)}$.  The point $p^{(3)}$ must lie on $l_u$, allowing a further degree of freedom, so there are a total of three degrees of freedom, as in the first statement of the lemma.  If $\pi_zC_1= \pi_zC_2$, then $C_1=C_2$ and $\alpha=\pi$. In this case there are three degrees of freedom in choosing $p^{(1)}$, after which the position of $p^{(2)}$ is determined, and one in choosing $p^{(3)}$.  This corresponds to case (i) in the statement of the lemma.

Suppose that $l_u$ is the $z$-axis. Then $C_1$ and $C_2$ are possibly different horizontal circles and $\alpha=\pi$. There are three degrees of freedom in choosing $p^{(1)}$ and one each for $p^{(2)}$ and $p^{(3)}$, since the latter point must lie on the $z$-axis.  This situation is included in case (ii) in the statement of the lemma.

Finally, suppose that $l_u$ is contained in the $xy$-plane.  If $0<\alpha<\pi$, then $C_1=C_2$ is a vertical circle.  There are three degrees of freedom choosing $p^{(1)}$, after which the position of $p^{(2)}$ is determined, and two degrees of freedom in choosing $p^{(3)}$ (which must lie in the plane containing $l_u$ and having angle $\alpha/2$ with the $xy$-plane).  Again, this is included in case (ii) in the statement of the theorem. If $\alpha=\pi$, then $C_1$ and $C_2$ are possibly different circles lying in the same vertical plane.  There are three degrees of freedom in choosing $p^{(1)}$ and then one degree of freedom for $p^{(2)}$, since it can lie anywhere on the vertical line through $\phi p^{(1)}$. The point $p^{(3)}$ lies in the vertical plane containing $l_u$, allowing two degrees of freedom in its choice.  Thus there are six degrees of freedom in total, as in case (iii) in the statement of the lemma.

\section{Two two-cycles: $\sigma=(2,1,4,3)$\label{se:twoc}}

\begin{lem}\label{twoc}
Let $\phi \neq \id$ be a rotation by angle $\alpha$ and let $\sigma=(2,1,4,3)$.  Then $\dim \mathcal{T}_{\sigma}(\phi)=3$, unless ${\mathrm{(i)}}$ the axis of rotation is horizontal and $0<\alpha<\pi$, when $\dim \mathcal{T}_{\sigma}(\phi)=4$, or ${\mathrm{(ii)}}$ the axis of rotation is neither horizontal nor vertical and $\alpha=\pi$, when $\dim \mathcal{T}_{\sigma}(\phi)=5$, or ${\mathrm{(iii)}}$ the axis of rotation is horizontal and $\alpha=\pi$, when $\dim \mathcal{T}_{\sigma}(\phi)=6$, or ${\mathrm{(iv)}}$ the axis of rotation is vertical and $\alpha=\pi$, when $\dim \mathcal{T}_{\sigma}(\phi)=7$.
\end{lem}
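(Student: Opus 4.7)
The plan is to follow the template of the proof of Lemma~\ref{onec} essentially verbatim, adjusting only the block structure of the coefficient matrix. Identifying $T$ with $p^{(1)},p^{(2)},p^{(3)}$ and using the centroid condition \eqref{cendep} together with \eqref{eq:cond2}, the four vector equations reduce to three: $A p^{(1)} - I p^{(2)} = 0$ from $i=1$, $-I p^{(1)} + A p^{(2)} = 0$ from $i=2$, and $I p^{(1)} + I p^{(2)} + (A+I) p^{(3)} = 0$ from $i=3$, while the equation coming from $i=4$ is redundant (one checks that substituting $p^{(4)}=-p^{(1)}-p^{(2)}-p^{(3)}$ and $\phi p^{(4)}=-\phi p^{(1)}-\phi p^{(2)}-\phi p^{(3)}$ into $(\phi p^{(4)})_j=p^{(3)}_j$ recovers the $i=3$ equation). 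With the same variable ordering as before, the $6\times 9$ coefficient matrix is
$$M_2=\left(\begin{array}{ccc} A & -I & 0 \\ -I & A & 0 \\ I & I & A+I \end{array}\right),$$
with $A,I$ as in \eqref{AI}. Since $M_2$ has rank at most six, the Rank Theorem immediately gives $\dim \mathcal{T}_{\sigma}(\phi)\ge 3$.

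Next I form the ideal $J_2$ generated by all $6\times 6$ minors of $M_2$ together with $\tau=a^2+b^2+c^2+d^2-1$, and compute a Gr\"obner basis with respect to the lexicographic ordering $a\succ b\succ c\succ d$, using the same Mathematica commands indicated in the proof of Lemma~\ref{onec}. I expect the basis to tell us, just as before, that the rank of $M_2$ drops below $6$ precisely when $a=0$ (that is, $\alpha=\pi$) or $d=0$ (that is, the axis is horizontal). The main obstacle is organizing the case analysis that follows, namely confirming that the rank of $M_2$ takes the correct value in each of the four degenerate regimes listed in the statement.

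With that, the cases will be handled by direct rank computations on the specialized matrix. When $d=0$ and $a\neq 0,\pm 1$ (horizontal axis with $0<\alpha<\pi$) I expect rank $5$, giving case~(i). When $a=0$ and $d\neq 0,\pm 1$ (axis neither horizontal nor vertical, $\alpha=\pi$) I expect rank $4$, giving case~(ii). When $a=d=0$ (horizontal axis, $\alpha=\pi$) I expect rank $3$, giving case~(iii). When $a=0$ and $d=\pm 1$ (vertical axis, $\alpha=\pi$) so that $b=c=0$, the block $A+I$ becomes the zero block while the other two blocks each contribute rank $1$ (the condition reduces to $\pi_z p^{(1)}=-\pi_z p^{(2)}$ and the $i=3$ row disappears), giving rank $2$ and hence $\dim \mathcal{T}_\sigma(\phi)=7$, which is case~(iv). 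The delicate point is case~(iv): one has to see that both the equations for $p^{(3)}$ and the redundancy between the equations for $p^{(1)}$ and $p^{(2)}$ cause a larger-than-expected collapse of the matrix; this is the reason the dimension jumps all the way to $7$ and distinguishes this lemma qualitatively from Lemma~\ref{onec}.
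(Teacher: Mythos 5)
Your proposal is correct and follows essentially the same route as the paper: the same block matrix $M_2$, the same ideal of $6\times 6$ minors plus $\tau$ with a lexicographic Gr\"obner basis forcing $a=0$ or $d=0$, and the same rank values $5,4,3,2$ in the four degenerate regimes, matching cases (i)--(iv). One small imprecision: in case (iv) the third block row does not vanish but becomes $(I,\ I,\ 0)$, i.e.\ it merely repeats the condition $\pi_z p^{(1)}+\pi_z p^{(2)}=0$ already imposed by the first two block rows, which is why the rank collapses to $2$.
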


\begin{proof}
As in the proof of Lemma~\ref{onec},  $T\in \mathcal{T}_{\sigma}(\phi)$ if and only if
\begin{equation}\label{eq:cond3b}
(\phi p^{(1)})_j - p^{(2)}_j=0,\quad
(\phi p^{(2)})_j - p^{(1)}_j=0,\quad{\text{and}}\quad
(\phi p^{(3)})_j +p^{(1)}_j+p^{(2)}_j+p^{(3)}_j=0,
\end{equation}
for $i=1,2,3$ and $j=1,2$, where in the third equation we have used (\ref{cendep}) to write $p^{(4)}_j$ in terms of $p^{(i)}_j$, $i=1,2,3$. Let $M_2$ be the $6 \times 9$ coefficient matrix of this system of six equations in nine variables, under the same convention as in the proof of Lemma~\ref{onec}.

From (\ref{eq:cond3b}), we have
$$M_2=\left( \begin{array}{ccc}
    A & -I & 0 \\
    -I & A & 0\\
    I & I & A+I
  \end{array} \right),$$
where $A$ and $I$ are given by (\ref{AI}).

The matrix $M_2$ has rank at most six, so $\dim \mathcal{T}_{\sigma}(\phi)\ge 9-6=3$.

Let $J_2$ be the ideal generated by all $6 \times 6$ minors of $M_2$ together with the polynomial $\tau=a^2+b^2+c^2+d^2-1$. A Gr\"obner basis $G_2$ of $J_2$ with respect to the lexicographic ordering $a \succ b \succ c \succ d$ is
\begin{eqnarray*}
G_2&=&\{a^2 d^2(d^2 - 1), a^2 c d (d^2 - 1), a^2 c^2 d , a^2 b d (d^2 - 1),
a^2 b c d , a^2 (b^2 - c^2) d , a^3 c d ,
a^3 b d, \tau\}.
\end{eqnarray*}

From this we see that if $\dim \mathcal{T}_{\sigma}(\phi)>3$, then $a=0$ or $d=0$. (Note that $d=\pm 1$ implies $a=b=c=0$.)  One can check that the rank of $M_2$ is $5$, $4$, $3$ or $2$, when $a\neq 0$ and $d=0$, or when $a=0$ and $d\neq 0,\pm 1$, or when $a=d=0$, or when $a=0$ and $d=\pm 1$, respectively.  This yields $\dim \mathcal{T}_{\sigma}(\phi)$ for cases (i), (ii), (iii), and (iv) in the statement of the lemma.
\end{proof}

The geometry behind the previous lemma is straightforward using the analysis given after Lemma~\ref{onec} and bearing in mind the centroid condition (\ref{cendep}).  We omit the details. Note that case (iv), when $\phi$ is a rotation by $\pi$ about the $z$-axis, was already mentioned in the introduction.

\section{Three-cycle: $\sigma=(2,3,1,4)$\label{se:threec}}

\begin{lem}\label{threec}
Let $\phi \neq \id$ be a rotation by angle $\alpha$ and let $\sigma=(2,3,1,4)$.  Then $\dim \mathcal{T}_{\sigma}(\phi)=3$, unless ${\mathrm{(i)}}$ the axis of rotation is horizontal, in which case $\dim \mathcal{T}_{\sigma}(\phi)=4$, or ${\mathrm{(ii)}}$ the axis of rotation is vertical and $\alpha=2\pi/3$, in which case $\dim \mathcal{T}_{\sigma}(\phi)=5$.
\end{lem}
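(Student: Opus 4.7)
The plan is to mirror the proofs of Lemmas~\ref{ident},~\ref{onec}, and~\ref{twoc}. Identifying $T$ with $p^{(i)}$, $i=1,2,3$, and using~(\ref{eq:cond2}) together with~(\ref{cendep}) to eliminate the $i=4$ equation (which is redundant because $\sigma(4)=4$), the condition $T\in\mathcal{T}_\sigma(\phi)$ for $\sigma=(2,3,1,4)$ becomes the linear system
\[
(\phi p^{(1)})_j-p^{(2)}_j=0,\quad (\phi p^{(2)})_j-p^{(3)}_j=0,\quad (\phi p^{(3)})_j-p^{(1)}_j=0,
\]
for $j=1,2$. Ordering the nine variables as in Lemma~\ref{onec}, the $6\times 9$ coefficient matrix takes the block-circulant form
\[
M_3=\left(\begin{array}{ccc} A & -I & 0 \\ 0 & A & -I \\ -I & 0 & A \end{array}\right),
\]
with $A$ and $I$ as in~(\ref{AI}). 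Since $M_3$ has rank at most six, the Rank Theorem immediately yields $\dim\mathcal{T}_\sigma(\phi)\ge 3$.

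To locate the rank-deficient rotations I would let $J_3$ be the ideal generated by all $6\times 6$ minors of $M_3$ together with $\tau=a^2+b^2+c^2+d^2-1$, and compute a Gr\"obner basis of $J_3$ with respect to the lexicographic order $a\succ b\succ c\succ d$, exactly as in the previous two lemmas. Guided by the geometry, I expect the variety of $J_3$ to consist of precisely two components: the horizontal-axis locus $d=0$, and the locus $b=c=0$, $a^2=1/4$, which encodes the two rotations by $\pm 2\pi/3$ about the $z$-axis (the only vertical rotations satisfying $\phi^3=\id$). For each component I would substitute into $M_3$ and read off the rank directly, expecting rank $5$ on the first component (so $\dim\mathcal{T}_\sigma(\phi)=4$) and rank $4$ on the second component (so $\dim\mathcal{T}_\sigma(\phi)=5$), while off both loci the rank of $M_3$ is maximal and $\dim\mathcal{T}_\sigma(\phi)=3$.

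As a geometric sanity check, in case (i) with axis the $x$-axis the equations reduce to $p^{(1)}_1=p^{(2)}_1=p^{(3)}_1$ together with three independent linear relations among the pairs $(p^{(i)}_2,p^{(i)}_3)$, leaving $9-5=4$ degrees of freedom, independently of $\alpha$. In case (ii) the rotation satisfies $\phi^3=\id$ and $\pi_z\phi$ acts as a rotation of the $xy$-plane, so the three cyclic conditions merely determine $\pi_z p^{(2)}$ and $\pi_z p^{(3)}$ from $\pi_z p^{(1)}$ while leaving all three heights $p^{(i)}_3$ free, giving $9-4=5$ degrees of freedom. The main obstacle I anticipate is the Gr\"obner basis computation itself: because of the block-circulant coupling of $M_3$, the $6\times 6$ minors do not decouple into products of smaller minors as in Lemma~\ref{ident}, so some care will be needed to read off the two components correctly and to confirm that no additional stray loci appear. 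Everything beyond this is routine linear algebra.
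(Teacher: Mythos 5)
Your proposal follows exactly the paper's route: the same system and block matrix $M_3$, the same ideal of $6\times 6$ minors together with $\tau$, a lexicographic Gr\"obner basis to isolate the exceptional loci, and a rank check on each locus; the paper's computed basis $G_3=\{d^2(4d^2-3)^2,\, cd(4d^2-3),\, bd(4d^2-3),\, d(b^2+c^2),\, \tau\}$ confirms precisely the two components you anticipate ($d=0$, and $b=c=0$ with $d=\pm\sqrt{3}/2$, hence $a=\pm 1/2$), with ranks $5$ and $4$ as you predict. Your geometric sanity checks for the horizontal and vertical cases also agree with the paper's discussion, so the proposal is correct and essentially identical in approach, modulo actually carrying out the symbolic computation.
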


\begin{proof}
As in the proof of Lemma~\ref{onec},  $T\in \mathcal{T}_{\sigma}(\phi)$ if and only if
\begin{equation}\label{eq:cond3c}
(\phi p^{(1)})_j - p^{(2)}_j=0,\quad
(\phi p^{(2)})_j - p^{(3)}_j=0,\quad{\text{and}}\quad
(\phi p^{(3)})_j -p^{(1)}_j=0,
\end{equation}
for $i=1,2,3$ and $j=1,2$. Let $M_3$ be the $6 \times 9$ coefficient matrix of this system of six equations in nine variables, under the same convention as in the proof of Lemma~\ref{onec}.

From (\ref{eq:cond3c}), we have
$$M_3=\left( \begin{array}{ccc}
    A & -I & 0 \\
    0 & A & -I\\
    -I & 0 & A
  \end{array} \right),$$
where $A$ and $I$ are given by (\ref{AI}).

Then $M_3$ has rank at most six, so $\dim \mathcal{T}_{\sigma}(\phi)\ge 9-6=3$.

Let $J_3$ be the ideal generated by all $6 \times 6$ minors of $M_3$ together with the polynomial $\tau=a^2+b^2+c^2+d^2-1$. A Gr\"obner basis $G_3$ of $J_3$ with respect to the lexicographic ordering $a \succ b \succ c \succ d$ is
$$G_3=\{ d^2(4d^2-3)^2, cd(4d^2-3), bd(4d^2-3),d(b^2+c^2), \tau\}.$$

It follows that if $\dim \mathcal{T}_{\sigma}(\phi)>3$, then either $d=0$ or $d=\pm \sqrt{3}/2$ and $b=c=0$ (and hence $a=\pm 1/2$).  One can check that the rank of $M_3$ is then either 5 or 4, respectively.  This yields $\dim \mathcal{T}_{\sigma}$ for cases (i) and (ii) in the statement of the lemma.
\end{proof}

Again, we comment on the geometry behind the previous lemma.    If the axis of rotation $l_u$ is horizontal, there are three degrees of freedom in choosing $p^{(1)}$. Then $p^{(2)}$ and $p^{(3)}$ must lie in the vertical plane $H$ containing $p^{(1)}$ and orthogonal to $l_u$.  Moreover, $p^{(3)}$ must lie in the line obtained by rotating the vertical line through $p^{(1)}$ by $-\alpha$ around $l_u$ (so that $\phi p^{(3)}$ and $p^{(1)}$ have the same projection on the $xy$-plane). This is another degree of
freedom. Similarly,  $p^{(2)}$ must lie in the line obtained by rotating
the vertical line through $p^{(3)}$ by $-\alpha$ around $l_u$ (so that $\phi p^
{(2)}$ and $p^{(3)}$ have the same projections on the $xy$-plane).  But $p^{(2)}$ must also lie in the vertical line through $\phi p^{(1)}$ (so that $\phi p^{(1)}$ and $p^
{(2)}$ have the same projections on the $xy$-plane). This means that $p^{(2)}$ is  determined by the positions of $p^{(1)}$ and $p^{(3)}$ and so there are only four degrees of freedom in this case.

If the axis of rotation is the $z$-axis and the angle of rotation $\alpha=2\pi/3$, there are three degrees of freedom in choosing $p^{(1)}$ and a further one degree of freedom for each of $p^{(2)}$ and $p^{(3)}$, since their heights may be different from that of $p^{(1)}$ and only their horizontal positions are determined. Thus there are five degrees of freedom in all, corresponding to case (ii) in the statement of the lemma.

In the general case, there are three degrees of freedom in choosing $p^
{(1)}$, after which the other points are determined. Indeed, $p^{(2)}$ must lie
on the vertical line $l_1$, say, through $\phi p^{(1)}$, and $p^{(3)}$ must lie on the
vertical plane through the line obtained by rotating $l_1$ by $\alpha$ around $l_u$ (so that $\phi p^{(2)}$ and $p^{(3)}$ have the same projection on the $xy$-plane). Moreover, $p^{(3)}$ must also lie on the line obtained by rotating the vertical line through $p^ {(1)}$ by $-\alpha$ around $l_u$ (so that $\phi p^{(3)}$ and $p^{(1)}$ have the same projection on the $xy$-plane). Hence, in the general case, $p^{(3)}$ is determined and consequently also $p^{(2)}$.

\section{Four-cycle: $\sigma=(2,3,4,1)$\label{se:fourc}}

\begin{lem}\label{fourc}
Let $\phi \neq \id$ be a rotation by angle $\alpha$ and let $\sigma=(2,3,4,1)$.  Then $\dim \mathcal{T}_{\sigma}(\phi)=3$, unless ${\mathrm{(i)}}$ the axis of rotation is neither horizontal nor vertical and $\alpha=\pi$, in which case $\dim \mathcal{T}_{\sigma}(\phi)=4$, or ${\mathrm{(ii)}}$ the axis of rotation is vertical and either $\alpha=\pi/2$ or $\alpha=\pi$, in which case $\dim\mathcal{T}_{\sigma}(\phi)=5$.
\end{lem}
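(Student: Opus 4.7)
The approach mirrors the treatment in Lemmas~\ref{ident}, \ref{onec}, \ref{twoc}, and \ref{threec}. Writing out the system (\ref{eq:cond2}) for $\sigma=(2,3,4,1)$, the equation indexed by $i=4$, namely $(\phi p^{(4)})_j=p^{(1)}_j$, is the sum of the other three equations (for $i=1,2,3$) with opposite sign, once the centroid conditions (\ref{cendep}) are imposed; this confirms that the $i=4$ equation is the redundant one. Hence identifying $T$ with $(p^{(1)},p^{(2)},p^{(3)})$ and eliminating $p^{(4)}$ via $p^{(4)}=-(p^{(1)}+p^{(2)}+p^{(3)})$ in the equation for $i=3$, I obtain a system of six linear equations in nine variables. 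Ordering the variables as in the proof of Lemma~\ref{onec}, the $6\times 9$ coefficient matrix has the block form
$$M_4=\left(\begin{array}{ccc} A & -I & 0 \\ 0 & A & -I \\ I & I & A+I \end{array}\right),$$
where $A$ and $I$ are given by (\ref{AI}). Since $\dim\mathcal{T}_\sigma(\phi)$ equals the nullity of $M_4$ and $M_4$ has rank at most six, the Rank Theorem yields $\dim\mathcal{T}_\sigma(\phi)\ge 3$.

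To locate the parameters at which the rank drops below six, I would form the ideal $J_4$ generated by all $6\times 6$ minors of $M_4$ together with $\tau=a^2+b^2+c^2+d^2-1$, and compute a Gr\"obner basis $G_4$ with respect to the lexicographic ordering $a\succ b\succ c\succ d$. Inspection of $G_4$ should show that the real zero set of $J_4$ on the unit quaternion sphere consists of precisely the two families of rotations named in the lemma: (i) $a=0$ with $d\neq 0$ and $(b,c)\neq(0,0)$, which by (\ref{eq:quaternionaxis}) corresponds to $\alpha=\pi$ with axis neither horizontal nor vertical; and (ii) $b=c=0$ with $a\in\{0,\pm 1/\sqrt{2}\}$, corresponding to a vertical axis with $\alpha=\pi$ or $\alpha=\pi/2$, respectively.

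The final step is to evaluate the actual rank of $M_4$ at each of the exceptional parameter values by direct substitution: one checks that in case (i) the rank falls to $5$, giving $\dim\mathcal{T}_\sigma(\phi)=4$, and in case (ii) the rank falls to $4$, giving $\dim\mathcal{T}_\sigma(\phi)=5$; in all other cases the rank is $6$ and $\dim\mathcal{T}_\sigma(\phi)=3$.

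The main obstacle is the Gr\"obner basis computation together with the subsequent case analysis, and in particular the verification of a qualitatively new phenomenon: unlike in Lemmas~\ref{ident}--\ref{threec}, the horizontal axis ($d=0$) is \emph{not} an exceptional case here for generic $\alpha$. The four-cycle interlocks the three free vertices more rigidly than any of the previous permutation types, and one must confirm computationally that the $6\times 6$ minors of $M_4$ do not all vanish on $\{d=0\}$, so that the enhanced degeneracy seen earlier for horizontal rotations genuinely disappears. Once this is read off cleanly from $G_4$, the rest of the argument is the routine substitute-and-rank-check already carried out in the preceding lemmas.
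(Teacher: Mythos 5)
Your setup is exactly the paper's: the same block matrix $M_4=\left(\begin{smallmatrix} A & -I & 0 \\ 0 & A & -I \\ I & I & A+I \end{smallmatrix}\right)$, the same lower bound $\dim\mathcal{T}_{\sigma}(\phi)\ge 3$ from the Rank Theorem, and the same plan of forming the ideal $J_4$ of $6\times 6$ minors plus $\tau$ and reading off the exceptional rotations from a lex Gr\"obner basis before doing rank checks. The gap is precisely in the step you single out as the crux. You assert that inspection of $G_4$ will show the real zero set of $J_4$ on the quaternion sphere to be exactly the two families named in the lemma, and in particular that the minors do not all vanish on $\{d=0\}$, so that horizontal axes are never exceptional here. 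That is not what the computation gives. In the Gr\"obner basis actually obtained (the paper records it as $\{a^2(2d^2-1)^2,\,a^2c(2d^2-1),\,a^2b(2d^2-1),\,a^2(b^2+c^2),\,a(d^2-1)(2d^2-1)^2,\,ac(2d^2-1),\,ab(2d^2-1),\,a(b^2+c^2),\,\tau\}$), every generator other than $\tau$ is divisible by $a$, so the entire locus $\{a=0\}$ --- all rotations with $\alpha=\pi$, \emph{including} those with horizontal axis, $d=0$ --- lies in the vanishing set of the $6\times 6$ minors. Your final claim ``in all other cases the rank is $6$'' therefore fails at $a=d=0$: taking $\phi$ to be the rotation by $\pi$ about the $x$-axis ($b=1$, $a=c=d=0$, rotation matrix $\mathrm{diag}(1,-1,-1)$), a direct check gives $\rank M_4=5$, hence $\dim\mathcal{T}_{\sigma}(\phi)=4$. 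Concretely, the tetrahedra with all four vertices in the $yz$-plane, second coordinates $t,-t,t,-t$, and arbitrary third coordinates summing to zero form a four-parameter family in $\mathcal{T}_{(2,3,4,1)}(\phi)$ for this $\phi$, so the drop in rank is genuine and not an artifact of the minors.

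So your case analysis must include the branch $a=0$, $d=0$ rather than arguing it away; once it is included, what comes out of the computation is that the rank is $5$ for every $a=0$ with $(b,c)\neq(0,0)$ (axis not vertical, $\alpha=\pi$), horizontal axes included. Be aware that the paper's own proof is silent on exactly this point: its rank list covers $a=0,\ d\neq 0,\pm 1$; $a=0,\ d=\pm 1$; and $a=d=\pm 1/\sqrt{2}$, $b=c=0$, but never $a=d=0$, and the restriction ``neither horizontal nor vertical'' in case (i) of the statement does not survive the computation above. The remainder of your argument --- the redundancy of the $i=4$ equation, the lower bound, the identification of the vertical-axis exceptions $\alpha=\pi/2,\pi$ from $b=c=0$ with $d=\pm 1/\sqrt{2}$ or $d=\pm 1$, and the rank evaluations giving dimensions $4$ and $5$ in the listed cases --- matches the paper and is fine; but as written, the verification you defer to the Gr\"obner computation is the one step that the computation refutes rather than confirms, so the horizontal $\alpha=\pi$ case must be treated explicitly.
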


\begin{proof}
As in the proof of Lemma~\ref{onec},  $T\in \mathcal{T}_{\sigma}(\phi)$ if and only if
\begin{equation}\label{eq:cond3d}
(\phi p^{(1)})_j - p^{(2)}_j=0,\quad
(\phi p^{(2)})_j - p^{(3)}_j=0,\quad{\text{and}}\quad
(\phi p^{(3)})_j +p^{(1)}_j+p^{(2)}_j+p^{(3)}_j=0,
\end{equation}
for $i=1,2,3$ and $j=1,2$, where in the third equation we have used (\ref{cendep}) to write $p^{(4)}_j$ in terms of $p^{(i)}_j$, $i=1,2,3$. Let $M_4$ be the $6 \times 9$ coefficient matrix of this system of six equations in nine variables, under the same convention as in the proof of Lemma~\ref{onec}.

From (\ref{eq:cond3d}), we have
$$M_4=\left( \begin{array}{ccc}
    A & -I & 0 \\
    0 & A & -I\\
    I & I & A+I
  \end{array} \right),$$
where $A$ and $I$ are given by (\ref{AI}).

Since $M_4$ has rank at most six, we obtain $\dim \mathcal{T}_{\sigma}(\phi)\ge 9-6=3$.

Let $J_4$ be the ideal generated by all $6 \times 6$ minors of $M_3$ together with the polynomial $\tau=a^2+b^2+c^2+d^2-1$. A Gr\"obner basis $G_4$ of $J_4$ with respect to the lexicographic ordering $a \succ b \succ c \succ d$ turns out to be
\begin{eqnarray*}
G_4&=&\{ a^2 (2 d^2 - 1)^2, a^2 c  (2 d^2 - 1), a^2 b (2 d^2 - 1),
a^2 (b^2 + c^2) , a (d^2 - 1) (2 d^2 - 1)^2,\\
& & a c (2 d^2 -1), a b (2 d^2 - 1), a (b^2 + c^2), \tau \}.
\end{eqnarray*}

Consequently, if $\dim \mathcal{T}_{\sigma}(\phi)>3$, then either $a=0$ or $d=\pm 1/\sqrt{2}$ and $b=c=0$ (and hence $a=\pm 1/\sqrt{2}$).  It can be verified that the rank of $M_4$ is 5 if $a=0$ and $d\neq 0,\pm 1$ or 4 if either $a=0$ and $d=\pm 1$ or $a=d=\pm 1/\sqrt{2}$ and $b=c=0$.  This yields $\dim \mathcal{T}_{\sigma}$ for cases (i) and (ii) in the statement of the lemma.
\end{proof}

Regarding the previous lemma, suppose that the axis of rotation $l_u$ is not horizontal or vertical.  If the angle of rotation $\alpha=\pi$, there are two degrees of freedom for choosing $p^{(1)}$ in the vertical plane containing $l_u$, after which $p^{(2)}$ and $p^{(3)}$ can be chosen anywhere in the vertical line containing $p^{(1)}$, making four degrees of freedom in all.  This corresponds to case (i) in the statement of the lemma.

If $l_u$ is the $z$-axis and $\alpha=\pi/2$ or $\alpha=\pi$, there are three degrees of freedom for choosing $p^{(1)}$.  After this the horizontal positions of $p^{(2)}$ and $p^{(3)}$ are determined but their heights are arbitrary, giving a total of five degrees of freedom.  This deals with case (ii) in the statement of the lemma.

It remains to explain the generic case.  It is clear that there are no solutions when $l_u$ is the $z$-axis unless $\alpha=\pi/2$ or $\alpha=\pi$, and it is easy to see that if $l_u$ is horizontal, then we have three degrees of freedom in choosing $p^{(1)}$, after which the other points are determined.  Suppose, then, that $l_u$ is neither vertical nor horizontal and $0<\alpha<\pi$. For $i=2,3,4$, let $n^{(i)}=(p^{(1)}+p^{(i)})/2$ be the midpoint of the edge $[p^{(1)},p^{(i)}]$. Using (\ref{cendep}), we obtain
\begin{equation}\label{Defwi}
    \begin{array}{c}
      n^{(2)}= (p^{(1)}+p^{(2)}-p^{(3)}-p^{(4)})/4,\\
      n^{(3)}= (p^{(1)}-p^{(2)}+p^{(3)}-p^{(4)})/4, \\
      n^{(4)}= (p^{(1)}-p^{(2)}-p^{(3)}+p^{(4)})/4.
    \end{array}
\end{equation}
Notice from (\ref{Defwi}) that the points $n^{(i)}$ determine $T$ via the equations $p^{(1)}=n^{(2)}+n^{(3)}+n^{(4)}$, $p^{(2)}=n^{(2)}-n^{(3)}-n^{(4)}$,
$p^{(3)}=-n^{(2)}+n^{(3)}-n^{(4)}$, and $p^{(4)}=-n^{(2)}-n^{(3)}+n^{(4)}$.
We shall therefore focus on the degrees of freedom in specifying $n^{(i)}$, $i=2,3,4$.

To this end, suppose that
\begin{equation}\label{FC}
    \phi p^{(i)}=p^{(i+1)}+\mu_ie_3\, ,
\end{equation}
for $i=1,\dots,4$, where $\mu_i \in \R$, indices are taken modulo 4, and $e_3$ denotes the unit vector in the direction of the positive $z$-axis. From (\ref{Defwi}) and (\ref{FC}), we get
\begin{equation}\label{Phi(wi)}
\begin{array}{c}
\phi n^{(2)}= -n^{(4)}+(\mu_1+\mu_2)e_3/2, \\
  \phi n^{(3)} = -n^{(3)}+ (\mu_1+\mu_3)e_3/2,\\
  \hspace{-.12in}\phi n^{(4)} = n^{(2)}+(\mu_1+\mu_4)e_3/2.
\end{array}
\end{equation}

For $i=2,3,4$, let $C_i$ be the circle contained in a plane orthogonal to $u$, with center $c^{(i)}$ on $l_u$, and containing $n^{(i)}$.  We claim that the position of $c^{(3)}$ on $l_u$ alone determines that of $n^{(3)}$, and hence there is only one degree of freedom in choosing $n^{(3)}$.  To see this, note that $-C_3$ is the circle orthogonal to $u$, with center $-c^{(3)}$ on $l_u$, and containing $-n^{(3)}$.  Let $u^{\perp}$ be the plane through the origin orthogonal to $u$ and let $\pi$ denote the parallel projection in the direction $e_3$ onto $u^{\perp}$.  Then $\pi C_3$ and $\pi(-C_3)$ are circles in $u^{\perp}$ with centers $\pi c^{(3)}$ and $\pi(-c^{(3)})$ on $\pi l_u$.  See Figure~\ref{fi:picture3}.  By the second equation in (\ref{Phi(wi)}), $\pi(\phi n^{(3)})=\pi(-n^{(3)})$.  Since $\pi(\phi n^{(3)})$ lies on $\pi C_3$, we see that $\pi C_3$ and $\pi(-C_3)$ intersect at $\pi n^{(3)}$ and $\pi(-n^{(3)})$.  Now $\pi n^{(3)}$ and $\pi(-n^{(3)})$ lie on a line through the origin, so this line is orthogonal to $\pi l_u$.

\begin{figure}[htb]
\epsfig{file=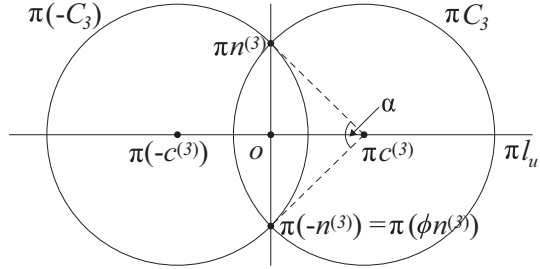, bb=69 19 342 158, clip=,
width=3in}
\caption{Geometry in the plane $u^{\perp}$ after parallel projection $\pi$.}
\label{fi:picture3}
\end{figure}

Therefore we have $\pi n^{(3)}=\lambda(u\times e_3)/\|u\times e_3\|$, for some real $\lambda$, because $\pi n^{(3)}$ is orthogonal to both $u$ and $e_3$. The angle between $\pi n^{(3)}$ and $\pi(-n^{(3)})=\pi(\phi n^{(3)})$ at $\pi c^{(3)}$ is $\alpha$, so
$$
 \lambda=\|\pi n^{(3)}\|=\|\pi c^{(3)}\|\tan(\alpha/2).
$$
It follows that once the position of $c^{(3)}$ on $l_u$ is specified, we know $\pi c^{(3)}$ and therefore $\lambda$ and hence $\pi n^{(3)}$. We also know the radius $\left(\|\pi c^{(3)}\|^2+\|\pi n^{(3)}\|^2\right)^{1/2}$ of $\pi C_3$, which equals that of $C_3$.  From this and $\pi n^{(3)}$, the position of $n^{(3)}$ is determined.  This proves the claim.

Next, we consider $n^{(2)}$ and $n^{(4)}$.  The first and third equations in (\ref{Phi(wi)}) tell us that
\begin{equation}\label{pisnew}
\pi(\phi n^{(2)})=\pi(-n^{(4)})\quad{\text{and}}\quad \pi(\phi n^{(4)})=\pi n^{(2)}.
\end{equation}
Identify $u^\perp$ with the complex plane $\C$ in such a way that $\pi l_u$ is the real axis.  Then since $\pi c^{(2)}$ and $\pi c^{(4)}$ lie on $\pi l_u$, they are real. Let $\omega = \exp(-\alpha i)$. Then by (\ref{pisnew}), we have
\begin{equation}\label{ComplexRelation}
    \begin{array}{c}
      (\pi n^{(2)}-\pi c^{(2)})\omega = -\pi n^{(4)}-\pi c^{(2)}\, , \\
      (\pi n^{(4)}-\pi c^{(4)})\omega = \pi n^{(2)}-\pi c^{(4)}\, .
    \end{array}
\end{equation}
If $\omega^2+1\neq 0$, we can solve the linear system (\ref{ComplexRelation}) for $\pi n^{(2)}$ and $\pi n^{(4)}$ in terms of $\pi c^{(2)}$ and $\pi c^{(4)}$.  Therefore once the positions of $c^{(2)}$ and $c^{(4)}$ on $l_u$ are specified (for which there are two degrees of freedom), we know $\pi c^{(2)}$ and $\pi c^{(4)}$, hence $\pi n^{(2)}$ and $\pi n^{(4)}$.  As above, this allows the radii of the circles $C_2$ and $C_4$ to be determined, and then $n^{(2)}$ and $n^{(4)}$ are also determined.
Finally, if $\omega^2+1=0$, then $\alpha=\pm \pi/2$.  Then it is easy to see that for (\ref{pisnew}) to hold, we must have $\pi C_2=\pi C_4$.  This means that $l_u$ is vertical, which is not the case.

\begin{figure}[htb]
\epsfig{file=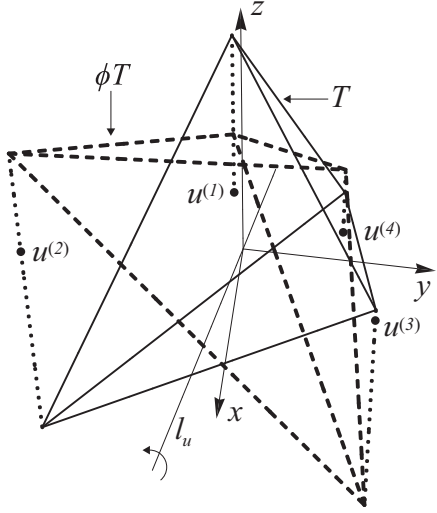, bb=117 188 370 434, clip=,
width=3in}
\caption{A four-cycle example (perspective view).  Dotted lines through the common projections $u^{(i)}$, $i=1,\dots,4$, of pairs of vertices of the two tetrahedra onto the $xy$-plane are vertical.}
\label{fi:4cycle}
\end{figure}

\begin{example}
For a specific example of the four-cycle situation, let $u=(1/\sqrt{2},0,1/\sqrt{2})$ and let $\phi$ be the rotation around $l_u$ by  $\pi/3$, so that $\phi$ has matrix
$$\left[\begin{array}{ccc}
3/4 & -\sqrt{3/8} & 1/4 \\
\sqrt{3/8} & 1/2 & -\sqrt{3/8}\\
1/4 & \sqrt{3/8} & 3/4
\end{array}\right].$$
Let $p^{(1)}=(-2, -3 + \sqrt{6}, 16 - 3 \sqrt{6})$, $p^{(2)} = (1, 3 - 4 \sqrt{6}, -19 + 3 \sqrt{6})$,
$p^{(3)} = (2, -3 + 3 \sqrt{6}, 8 - 3 \sqrt{6})$, and $p^{(4)}= (-1, 3, -5 + 3 \sqrt{6})$.
Then it is easy to check that the tetrahedron $T=\{p^{(1)},\dots,p^{(4)}\}$ is full dimensional, and
$\phi p^{(1)}=(1, 3 - 4 \sqrt{6}, 13 - 3 \sqrt{6})$, $\phi p^{(2)} = (2, -3+ 3\sqrt{6}), -20 + 3\sqrt{6})$,
$\phi p^{(3)} = (-1, 3, 11 - 3 \sqrt{6})$, and $\phi p^{(4)}= (-2, -3 + \sqrt{6}, -4 + 3 \sqrt{6})$.  The projections onto the $xy$-plane give the set
$U=\{u^{(1)},\dots,u^{(4)}\}$, where $u^{(1)}=(-2, -3 + \sqrt{6})$, $u^{(2)}=(1, 3 - 4 \sqrt{6})$,
$u^{(3)}=(2, -3 + 3 \sqrt{6})$, and $u^{(4)}=(-1, 3)\}$.
See Figure~1 for an illustration of this example.
\end{example}

\section{Main results}\label{se:main}

\begin{thm}\label{thm0}
For almost all tetrahedra $T$ in $\R^3$ with centroid at the origin, there does not exist a $\phi\neq \id\in \SO(3)$ such that $\pi_z\phi T=\pi_zT$.  Indeed, the exceptional set constitutes a finite union of subspaces, each of dimension at most seven, in $\R^9$.
\end{thm}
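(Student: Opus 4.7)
The plan is to present the exceptional set as the union
$$\mathcal{E} = \bigcup_{\sigma \in \Sym_4} \bigcup_{\phi \neq \id} \mathcal{T}_\sigma(\phi) \subset \R^9,$$
and to control its dimension by combining Lemmas~\ref{ident}--\ref{fourc}. First I would reduce the outer union to the five essentially different cycle types of $\Sym_4$, using the observation made at the end of Section~\ref{se:prelim}: every $\sigma \in \Sym_4$ is equivalent, up to relabeling the vertices of $T$, to one of $\id$, $(2,1,3,4)$, $(2,1,4,3)$, $(2,3,1,4)$, or $(2,3,4,1)$. Hence the inner analysis reduces to the five representative subsystems already treated in Sections~\ref{se:labeled}--\ref{se:fourc}.

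Second, for each representative $\sigma$ I would stratify the rotations $\phi \in \SO(3)\setminus \{\id\}$ according to the finitely many subcases distinguished in the cited lemmas (axis horizontal, vertical, or neither; angle generic, $\pi/2$, $2\pi/3$, or $\pi$) and read off the fibre dimension $\dim \mathcal{T}_\sigma(\phi)$ on each stratum. An inspection of all cases gives a maximum of $7$, attained only in case~(iv) of Lemma~\ref{twoc} by the single rotation of angle $\pi$ about the $z$-axis paired with $\sigma = (2,1,4,3)$, which contributes a single fixed seven-dimensional linear subspace of $\R^9$. The geometric descriptions following each lemma furthermore identify each fibre $\mathcal{T}_\sigma(\phi)$ as a concrete linear subspace characterized by the vertices lying on a specified line or plane through the origin, which makes it possible to describe the union over a continuous family of $\phi$'s explicitly.

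The main obstacle is that an inner union over a positive-dimensional stratum of rotations $\phi$ is a priori an algebraic subvariety, not a single linear subspace. I would resolve this by exploiting the geometric descriptions to enclose each such union within a finite collection of linear subspaces of dimension at most seven; for instance, the generic $\sigma = \id$ stratum produces tetrahedra whose vertices lie on a common line through the origin, confining the union to a low-dimensional algebraic subvariety of $\R^9$, while the generic strata in Lemmas~\ref{onec}--\ref{fourc} contribute algebraic subvarieties of dimension at most $3 + \dim \SO(3) = 6$. Aggregating the finitely many contributions, one per stratum across the five cycle types of $\Sym_4$, then yields the finite union of subspaces of dimension at most seven claimed by the theorem.
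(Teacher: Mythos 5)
Your overall route is the same as the paper's---reduce to the five cycle types and quote Lemmas~\ref{ident}--\ref{fourc}---but the extra step you add to handle the union over $\phi$ is precisely the one that fails. The paper's proof simply fixes the witnessing rotation: if $\pi_z\phi T=\pi_zT$ for some $\phi\neq\id$, then $T\in\bigcup_{\sigma}\mathcal{T}_{\sigma}(\phi)$, and for that \emph{fixed} $\phi$ the lemmas show this is a finite union of linear subspaces of $\R^9$ of dimension at most seven; the measure-zero conclusion is drawn from that, and the ``finite union of subspaces of dimension at most seven'' clause is obtained in this per-$\phi$ sense. You instead form $\bigcup_{\sigma}\bigcup_{\phi\neq\id}\mathcal{T}_{\sigma}(\phi)$ and claim each inner union over a positive-dimensional stratum of rotations can be enclosed in finitely many linear subspaces of dimension at most seven. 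That claim is false. Take $\sigma=\id$ and the stratum of rotations with horizontal axis: for such a $\phi$ with axis $l$ and angle $\alpha$, $\mathcal{T}_{\id}(\phi)$ is the six-dimensional subspace of tetrahedra lying in one of the two planes through $l$ at angle $\alpha/2$ to the $xy$-plane, and as $(l,\alpha)$ runs over this two-parameter family the union sweeps out \emph{all} tetrahedra (centroid at $o$) contained in some plane through the origin other than the $xy$-plane, i.e.\ a dense subset of the irreducible cubic hypersurface $\det\bigl(p^{(1)},p^{(2)},p^{(3)}\bigr)=0$ in $\R^9$. This set is eight-dimensional and, by irreducibility, cannot be covered by finitely many subspaces of dimension at most seven. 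For the same reason your dimension bookkeeping is off: the relevant quantity for the aggregated union is fibre dimension plus stratum dimension, which here is $6+2=8$, so the maximum is not $7$ and is not attained only in case (iv) of Lemma~\ref{twoc} (e.g.\ cases (ii) and (iii) of Lemma~\ref{onec} also give $5+2=7$ and $6+1=7$); likewise your ``$3+\dim\SO(3)=6$'' bound covers only the generic strata.

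What survives of your argument is the measure-theoretic assertion: on each of the finitely many strata the union is a semialgebraic set of dimension at most $8<9$, hence Lebesgue-null, which proves the first sentence of the theorem. But the second sentence, as you derive it, is not established---algebraic subvarieties are not unions of subspaces, and your final ``aggregation'' step is a non sequitur. To get the statement in the form the paper intends, argue as the paper does: for each fixed $\phi\neq\id$, $\bigcup_{\sigma}\mathcal{T}_{\sigma}(\phi)$ is a finite union of subspaces of dimension at most seven (this is exactly what the lemmas give), and deduce nullity of the exceptional set from the dimension count over the strata of rotations rather than from an enclosure by low-dimensional subspaces.
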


\begin{proof}
Let $T$ be a tetrahedron in $\R^3$ with centroid at the origin, and suppose that $\phi\neq \id\in \SO(3)$ is such that $\pi_z\phi T=\pi_zT$.  Then there is a $\sigma_0\in \Sym_4$ such that (\ref{eq:cond2}) holds with $\sigma=\sigma_0$ and hence
$$T\in \mathcal{T}_{\sigma_0}(\phi) \subset \bigcup_{\sigma\in \Sym_4}\mathcal{T}_{\sigma}(\phi).$$
By our conventions and Lemmas~\ref{ident}, \ref{onec}, \ref{twoc}, \ref{threec}, and~\ref{fourc}, the latter set is a finite union of subspaces of $\R^9$, each of which has dimension at most seven.  Therefore this set is of zero Lebesgue $9$-dimensional measure and the theorem is proved.
\end{proof}

\begin{ex}\label{untet}
We claim that a specific example of a full-dimensional tetrahedron satisfying Theorem~\ref{thm0} is $T=\{p^{(1)},\dots,p^{(4)}\}$, where
$$
  p^{(1)} = (1,0,0), \quad
  p^{(2)} = (1,1,0), \quad
  p^{(3)} = (2,1,2), \quad {\text{and}}\quad
  p^{(4)} = (4,-2,-2).
$$
To see this, observe that
\begin{equation}\label{new1}
\|p^{(1)}\|^2 = 1, \quad
  \|p^{(2)}\|^2 = 2, \quad
  \|p^{(3)}\|^2 = 9, \quad {\text{and}}\quad
  \|p^{(4)}\|^2 = 24,
\end{equation}
while
\begin{equation}\label{new2}
\|\pi_zp^{(1)}\|^2 = 1, \quad
  \|\pi_zp^{(2)}\|^2 = 2, \quad
  \|\pi_zp^{(3)}\|^2 = 5, \quad {\text{and}}\quad
  \|\pi_zp^{(4)}\|^2 = 20,
\end{equation}
By (\ref{eq:cond2}), we have
$$\|\pi_zp^{(\sigma(i))}\|=\|\pi_z\phi p^{(i)}\|\le \|\phi p^{(i)}\|=\|p^{(i)}\|,$$
for $i=1,\dots,4$.  Comparing (\ref{new1}) and (\ref{new2}), we see that the only possibility is that $\sigma=\id$. Since $T$ is full dimensional, our claim follows from Proposition~\ref{PHW}.
\end{ex}

From a practical point of view, perhaps the most important observation is that there are only 24 ways to label the points in a tetrahedron $T$ in $\R^3$.  If $T$ is full dimensional, then, for any particular such labeling, $\phi$ can be reconstructed by the method of Section~\ref{se:labeled}, or that of Robinson, Hemler, and Webber \cite{rhw-2000}, or symbolically using standard software, yielding at most 24 solutions for the rotation $\phi$.

We close with a remark illustrating how the uniqueness issues are
reflected within symbolic reconstruction methods.
While for each fixed permutation $\sigma$, there is a unique solution
for reconstructing a full-dimensional tetrahedron
(since fixing the permutation allows Proposition~\ref{PHW} to be applied), there may be more than one solution for lower-dimensional tetrahedra.  For example, consider the tetrahedron $T$ with vertices
$p^{(1)} = (-1,0,1)$, $p^{(2)} = (0,0,0)$, $p^{(3)} = (0,0,-2)$, and $p^{(4)} = (1,0,1)$, with $\pi_zT=U=\{(-1,0),(0,0),(1,0)\}$. Let $\sigma = (2,3)$ be the one-cycle that interchanges 2 and 3.  Solving symbolically, we obtain the Gr\"obner basis $\{ d,c,b^3-b,ab,a^2+b^2-1 \}$, which yields four distinct solutions for $(a,b,c,d)$.  However, these only result in two rotation matrices,
$$\left( \begin{array}{ccc}
    1 & 0 & 0 \\
    0 & 1 & 0 \\
    0 & 0 & 1
  \end{array} \right) \quad \text{ and } \quad
\left( \begin{array}{ccc}
    1 & 0 & 0 \\
    0 & -1 & 0 \\
    0 & 0 & -1
  \end{array} \right).
$$
Of course, these correspond to the two possible rotations $\phi$ such that $\pi_z\phi T=U$, namely, the identity and the rotation by $\pi$ about the $x$-axis.

\providecommand{\bysame}{\leavevmode\hbox to3em{\hrulefill}\thinspace}
\providecommand{\MR}{\relax\ifhmode\unskip\space\fi MR }
\providecommand{\MRhref}[2]{%
  \href{http://www.ams.org/mathscinet-getitem?mr=#1}{#2}
}
\providecommand{\href}[2]{#2}


\end{document}